\newtheorem{theorem}{Theorem}[section]
\newtheorem{corollary}[theorem]{Corollary}
\newtheorem{definition}[theorem]{Definition}
\newtheorem{lemma}[theorem]{Lemma}
\newtheorem{proposition}[theorem]{Proposition}
\newtheorem{remark}[theorem]{Remark}
\theoremstyle{plain}
\numberwithin{equation}{section}
\begin{document}

\title[Pleijel-type theorem]{A Pleijel-type theorem for the quantum harmonic oscillator}
\author{Philippe Charron}

\date{\today}

\begin{abstract}
	We prove a Pleijel-type theorem for the asymptotic behaviour of the number of nodal domains of eigenfunctions of the quantum harmonic oscillator in any dimension.
\end{abstract}

\maketitle

\section{Introduction and main results}

\subsection{Pleijel's nodal domain theorem}

Let $\Omega \subset \mathbb{R}^n$ be a bounded domain. Let $\lambda_1 < \lambda_2 \leq \lambda_3 \ldots$ be the eigenvalues of the Dirichlet Laplacian in $\Omega$ and let ${\left\lbrace f_i \right\rbrace}_{i \geq 1}$ be an orthogonal basis of eigenfunctions associated with those eigenvalues.

\bigskip

Recall that a nodal domain of a function is a connected component of the complement of the zero-set of that function. Let $\mu(f)$ be the number of nodal domains of the function $f$.

\bigskip

Recall that Courant's nodal domain theorem states that $\mu(f_k)\leq k$. In 1956, Pleijel found a better estimate when eigenvalues tend to infinity. There exists a constant $\gamma(n) < 1$ that depends only on the dimension such that:

\begin{equation}
\label{pleijel1}
\limsup\limits_{k \to \infty} \frac{\mu(f_k)}{k} \leq \gamma(n) = \frac{2^{n-2}n^2 \Gamma(n/2)^2}{(j_{\frac{n}{2}-1})^n} \, .
\end{equation}

Here, $j_{\frac{n}{2}-1}$ is the first zero of the Bessel function of the first kind $J_{\frac{n}{2}-1}$.

\bigskip

This constant is strictly decreasing with $n$ (see \cite[p. 10]{HelfferPersson}). Here are the first few values: $\gamma(2)=0.69166, \gamma(3)= 0.455945, \gamma(4)= 0.296901, \gamma(5)=0.19294$.

\bigskip

\begin{remark}
	\textnormal{
  This result has been proved in the case of the Neumann Laplacian in dimension $2$ for piecewise analytic domains in \cite{Polterovich}. It is still unknown if the result holds in the Neumann case in higher dimensions. Recent efforts (\cite{Bourgain}, \cite{Steinerberger}) have been made to improve the estimate in dimension~$2$.}
\end{remark}

\bigskip

\subsection{Quantum harmonic oscillator}

Our goal is to study the nodal domains of eigenfunctions of the quantum harmonic oscillator. 

\bigskip

The quantum harmonic oscillator is first defined on $S(\mathbb{R}^n)$ by:

\begin{align}
\label{Hf1}
H: S(\mathbb{R}^n) \to S(\mathbb{R}^n) \, , \nonumber \\
H f = -\Delta f + V(x) f \, .
\end{align}

Here, $V$ is a positive-definite quadratic form and $S(\mathbb{R}^n)$ denotes the Schwartz space of rapidly decaying functions over $\mathbb{R}^n$.

\bigskip

There exists a unique self-adjoint extension of $H$ over $L^2(\mathbb{R}^n)$, which will be denoted by $\mathbf{H}$. However, there exists a basis of $L^2(\mathbb{R}^n)$ consisting of eigenfunctions of $\mathbf{H}$ which are all in $S(\mathbb{R}^n)$. 

\bigskip

The quantum harmonic oscillator can be viewed as a Schrödinger operator with potential $V(x)$. It has two properties that make it particularly interesting. Its spectrum is discrete since $\lim\limits_{|x| \to \infty}V(x)= +\infty$ (see \cite{Simon}) and its eigenfunctions can be computed explicitly.

\bigskip

There exists an orthogonal basis  $y_1, y_2, \ldots, y_n$ of $\mathbb{R}^n$ and constants \linebreak $a_1, a_2, \ldots, a_n > 0$ such that $V(x)= \sum\limits_{i=1}^n a_i^2 y_i^2$. The Laplacian is invariant under orthogonal changes of the basis. Therefore, if we wish to study the nodal domains of the eigenfunctions of the harmonic oscillator, we can restrict ourselves to potentials of the following form: 

\begin{equation}
\label{H}
V(x) = \sum\limits_{i=1}^n a_i^2 x_i^2 \, .
\end{equation}

If all the coefficients $a_i$ are equal, the quantum harmonic oscillator $H$ is called \textit{isotropic}.
\bigskip

A basis in $L^2(\mathbb{R}^n)$ of the eigenfunctions of $H$ is given by

\begin{equation}
\label{fonctionpropre}
f_{k_1, \ldots, k_n}(x) = \prod\limits_{i=1}^n e^{\frac{-a_i x_i^2}{2}} H_{k_i}(\sqrt{a_i}x_i) \, .
\end{equation}

Here, $H_n$ denotes the $n$-th Hermite polynomial, see \cite{Szego}.

\bigskip

The corresponding eigenvalues are given by $\lambda_{k_1 \ldots k_n}= \sum\limits_{i=1}^n a_i (2k_i + 1)$.

\bigskip
Note that Courant's theorem holds for $H$ by a straightforward adaptation of the argument for the Laplacian. Two slightly improved results in the isotropic case can be found in \cite{HelfferBerard} and \cite{Leydold}.

\subsection{Main result}

The main result of this paper is

\begin{theorem}
\label{theoremprincipal}

Let $H$ be the quantum harmonic oscillator (\ref{Hf1}).

\bigskip

The number $\mu(f_k)$ of nodal domains of the $k$-th eigenfunction of $H$ satisfies:

\begin{equation}
\label{ineqprincipale}
\limsup\limits_{k\rightarrow \infty} \frac{\mu(f_k)}{k} \leq \gamma(n) \, .
\end{equation}
\end{theorem}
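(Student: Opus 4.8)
The plan is to establish the same constant $\gamma(n)$ that appears in Pleijel's original theorem, which strongly suggests that the mechanism should be a Weyl-law-plus-Faber-Krahn argument adapted to the harmonic oscillator setting. I would first replace the global nodal-domain count by a \emph{local} one. The eigenfunctions $f_k$ decay rapidly, so their nodal domains are essentially confined to the classically allowed region $\left\{ V(x) \leq \lambda_k \right\}$, whose volume grows like a fixed power of $\lambda_k$. The key quantitative input is the Faber--Krahn inequality: on each nodal domain $D$, the function $f_k$ is a Dirichlet eigenfunction of $H$ with eigenvalue $\lambda_k$, so the lowest Dirichlet eigenvalue of $H$ on $D$ equals $\lambda_k$. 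Away from the potential's influence (i.e.\ where $V$ is comparably small relative to $\lambda_k$), the operator $H$ behaves like the plain Laplacian, and the classical Faber--Krahn inequality for the Laplacian bounds $\lambda_k$ from below in terms of the volume of $D$, forcing each nodal domain to have volume at least $c\,\lambda_k^{-n/2}$ up to the Bessel-zero constant.

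The second ingredient is a Weyl-type asymptotic for the eigenvalue counting function of $H$. The plan is to show that the index $k$ of the eigenvalue $\lambda_k$ grows like the phase-space volume $\mathrm{Vol}\left\{ (x,\xi) : |\xi|^2 + V(x) \leq \lambda_k \right\}$ divided by $(2\pi)^n$. Since $V$ is a nondegenerate quadratic form, this phase-space volume is computable in closed form and scales as a constant times $\lambda_k^{n}$. Combining the volume lower bound per nodal domain with the total available volume in the allowed region gives an upper bound on $\mu(f_k)$, and dividing by $k$ (using the Weyl asymptotic) should produce exactly $\gamma(n)$ in the limit, since the Bessel constant enters through Faber--Krahn and the remaining factors cancel against the Weyl volume.

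The main obstacle I anticipate is handling the \emph{transition region} near the boundary $V(x) = \lambda_k$ of the classically allowed region, together with the exponentially small but nonzero tails in the forbidden region $V(x) > \lambda_k$. In the pure Laplacian case the domain is fixed and Faber--Krahn applies uniformly; here the effective operator is $-\Delta + V$, and on nodal domains that sit in regions where $V$ is a nonnegligible fraction of $\lambda_k$, one cannot simply discard the potential. I would address this by a rescaling and partition argument: cover the allowed region by cubes on which $V$ is nearly constant, absorb the constant part of $V$ into a shifted eigenvalue, and apply Faber--Krahn locally to $-\Delta$ with the residual eigenvalue $\lambda_k - V(x_0)$. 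The subtlety is then a careful accounting showing that the contribution of nodal domains lying in the transition and forbidden regions is negligible relative to $k$ — plausibly by controlling their total number via the decay estimates on Hermite functions or Agmon-type bounds — so that the dominant contribution comes from the bulk of the allowed region where the Laplacian approximation and the classical Pleijel constant are valid.

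One final point to verify is that the combination of the Weyl volume constant and the Faber--Krahn volume constant reproduces precisely $\gamma(n) = 2^{n-2}n^2\Gamma(n/2)^2 / (j_{n/2-1})^n$ rather than some larger or $V$-dependent constant; I expect the anisotropy coefficients $a_i$ to drop out because both the counting function and the allowed-region volume scale by the same factor $\prod_i a_i^{-1}$, leaving the universal dimensional constant intact.
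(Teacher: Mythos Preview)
Your high-level plan matches the paper's: Faber--Krahn gives a lower volume bound on each nodal domain in terms of a \emph{shifted} eigenvalue $\lambda_k - V(x_0)$, the Weyl-type count (\ref{anisotropic1}) relates $k$ to $\lambda_k^n$, and the $a_i$ cancel. Two points, however, separate your sketch from a working proof.

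First, and most importantly, your mechanism for the ``transition region'' is not the right one. You propose to control nodal domains that spill across the boundaries of your cubes (and into the forbidden region) via Agmon-type decay. But decay estimates bound the \emph{size} of $f_k$, not the \emph{number} of its nodal components meeting a given hypersurface; a nodal domain can be arbitrarily thin and still hit many cube faces. The paper's key idea here is algebraic, not analytic: since $f_k$ is a polynomial of degree $O(\lambda_k)=O(k^{1/n})$ times a Gaussian, one restricts to a level ellipsoid $\{V=c\}$ and invokes Milnor's bound on Betti numbers of real algebraic sets (Theorem~\ref{theorembettipolynome} and Propositions~\ref{theoremboule}--\ref{theoremsphere2}) to show that at most $O(\deg(f_k)^{n-1})=O(k^{(n-1)/n})$ nodal domains touch any single ellipsoid. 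With $M$ boundary ellipsoids this gives $O(Mk^{(n-1)/n})=o(k)$ boundary-crossing domains, provided $M=o(k^{1/n})$. This is the step your proposal is missing; without it the partition argument does not close.

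Second, the paper partitions not by cubes but by \emph{generalized annuli} $\{(\tfrac{i-1}{M})^{2/n}\lambda_k\le V< (\tfrac{i}{M})^{2/n}\lambda_k\}$. This is not merely cosmetic: the level sets of $V$ are exactly what you need so that the shifted Faber--Krahn eigenvalue is constant on each shell, and the particular exponent $2/n$ makes all shells have equal volume. Summing $\mathrm{Card}(A_i)$ then becomes the Riemann sum for $\int_0^1 (1-x^{2/n})^{n/2}\,dx = \tfrac{n^2\Gamma(n/2)^2}{4\,n!}$, and it is precisely this Beta-type integral, combined with the Weyl constant, that produces $\gamma(n)$ on the nose. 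A cube covering would make both the boundary-count and this final cancellation messier. Finally, note that nodal domains are not ``confined'' to the allowed region---they can be unbounded---but Lemma~\ref{lemma1} shows each one \emph{intersects} it, which is all the annulus argument needs.
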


The constant $\gamma(n)$ is the same as in equation (\ref{pleijel1}).

\subsection{Eigenvalue multiplicities}
If the coefficients in (\ref{H}) are rationally independent, the eigenvalues of $H$ are simple. Recall that $a_1, a_2, \ldots, a_n$ are rationally dependent if the only integers $k_1, k_2, \ldots, k_n$ that satisfy $a_1 k_1 + a_2 k_2 + \ldots + a_n k_n = 0$ are identically zero. In this case, we can compute the number of nodal domains of each eigenfunction since it is always a product of polynomials in one variable and obtain:

\begin{theorem}
	\label{theoremeprincipal2}
	Let $H$ be the quantum harmonic oscillator (\ref{Hf1}) with the coefficients $a_1, a_2, \ldots, a_n$ rationally independent.
	
	\bigskip
	
	The number $\mu(f_k)$ of nodal domains of the $k$-th eigenfunction of $H$ satisfies:

	\begin{equation}
	\label{ineqprincipale2}
	\limsup\limits_{k\rightarrow \infty} \frac{\mu(f_k)}{k} = \frac{n!}{n^n} \, .
	\end{equation}
\end{theorem}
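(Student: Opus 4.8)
The plan is to exploit the product structure of the eigenfunctions, which in the rationally independent case is available for \emph{every} eigenfunction because the spectrum is simple: each multi-index $(k_1, \ldots, k_n)$ then carries a unique rank $k$, so that $f_k = f_{k_1, \ldots, k_n}$ is unambiguous.

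First I would compute the nodal count exactly. The eigenfunction factors as $f_{k_1, \ldots, k_n}(x) = \prod_{i=1}^n g_i(x_i)$ with $g_i(x_i) = e^{-a_i x_i^2/2} H_{k_i}(\sqrt{a_i}\,x_i)$. Each $g_i$ vanishes precisely at the $k_i$ simple real zeros of $H_{k_i}$, so the nodal set is a union of $\sum_i k_i$ coordinate hyperplanes partitioning $\mathbb{R}^n$ into $\prod_i (k_i + 1)$ open rectangular boxes. On each box every factor has constant sign, hence so does $f$; and since the zeros of the Hermite polynomials are simple, $f$ changes sign across each hyperplane. Thus adjacent boxes lie in different nodal domains, each box is a single nodal domain, and
\[
\mu(f_{k_1, \ldots, k_n}) = \prod_{i=1}^n (k_i + 1).
\]

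Next I would relate the rank to the eigenvalue. The rank $k$ of $f_{k_1, \ldots, k_n}$ equals the counting function $N(\lambda)$ at $\lambda = \lambda_{k_1 \ldots k_n}$, namely the number of lattice points $(\ell_1, \ldots, \ell_n) \in \mathbb{Z}_{\geq 0}^n$ with $\sum_i a_i(2\ell_i + 1) \leq \lambda$. A standard Weyl-type lattice-point estimate gives $N(\lambda) = \frac{\lambda^n}{2^n n! \prod_i a_i}(1 + o(1))$, the leading term being the volume of the simplex $\{t \geq 0 : \sum_i 2 a_i t_i \leq \lambda\}$. For the upper bound I would apply the arithmetic--geometric mean inequality to the quantities $a_i(k_i + 1)$, whose sum is $\frac{1}{2}(\lambda + \sum_i a_i)$; this yields $\prod_i (k_i + 1) \leq \frac{(\lambda + \sum_i a_i)^n}{(2n)^n \prod_i a_i}$ uniformly over all multi-indices of eigenvalue $\lambda$. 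Dividing by $N(\lambda)$ gives $\mu(f_k)/k \leq \frac{n!}{n^n}(1 + o(1))$. For the matching lower bound I would exhibit the extremizers directly: taking $k_i = \lfloor \lambda/(2 n a_i) \rfloor$ makes all the $a_i(k_i+1)$ asymptotically equal, so AM--GM becomes sharp and the ratio tends to $n!/n^n$ along this sequence.

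The main obstacle I anticipate is the bookkeeping in the final step: one must control the error terms in the Weyl asymptotic simultaneously in numerator and denominator, and verify that the non-integer optimizer $k_i = \lambda/(2 n a_i)$ is well approximated by admissible integer multi-indices without degrading the ratio. It is worth emphasizing that rational independence enters only to guarantee simplicity of the spectrum, so that the rank is an unambiguous function of the multi-index; it plays no further role in the optimization, which is purely a constrained maximization of a product.
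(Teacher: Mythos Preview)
Your proposal is correct and follows essentially the same route as the paper: compute the exact nodal count $\prod_i(k_i+1)$, bound it above via AM--GM (the paper phrases this as concavity of $\log$, which is the same inequality), combine with the Weyl asymptotic for $N(\lambda)$, and exhibit the extremizing sequence for the lower bound. Your explicit choice $k_i = \lfloor \lambda/(2na_i)\rfloor$ is in fact cleaner than the paper's somewhat informal passage from the integer supremum to the real supremum in its lower-bound step.
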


 However, if some coefficients are rationally dependent, the eigenspace associated with an eigenvalue may have dimension greater than one and we need to deal with linear combinations of eigenfunctions.

\bigskip

For instance, in the isotropic case in $\mathbb{R}^n$, which is the most widely studied, the eigenvalues are $\lambda_k = 2j + n$ for all $k \in \left[ \binom{n+j-1}{j-1} +1, \binom{n+j}{j} \right]$. 

\bigskip

Hence, the multiplicities grow to infinity. It is therefore hard to compute the number of nodal domains of the eigenfunctions directly. In this paper, we present a different approach that covers all cases.

\subsection{Sketch of the proof of Theorem \ref{theoremprincipal}}

When we analyse Pleijel's original proof of the theorem in the case of the Laplacian with Dirichlet boundary conditions on an Euclidian domain $\Omega$, the main idea is to give a lower bound on the area of each nodal domain using Faber-Krahn's inequality. We then divide the area of $\Omega$ by this lower bound and apply Weyl's law to get the final inequality.

\bigskip

If we try to use the same argument for the quantum harmonic oscillator, there is an obstacle: we are considering functions over $\mathbb{R}^n$, which has infinite volume. We must therefore find a way to resolve this issue.

\bigskip

We first show that any nodal domain must intersect the classically allowed region $\left\lbrace  V(x) < \lambda \right\rbrace $ (see \cite{Hall}), which in our case is the interior of an ellipsoid.

\bigskip

We then divide this ellipsoid into regions called generalized annuli (see Definition \ref{generalizedannulus}). This is the main new idea, which lets us bound the number of nodal domains. We use a theorem of Milnor on the Betti numbers of sublevel sets of real polynomials in order to give an upper bound on the number of nodal domains that intersect more than one generalized annulus. Finally, we use Faber-Krahn's inequality to get lower bound on the area of each nodal domain located in each generalized annulus.

\section{Proof of theorem \ref{theoremprincipal}}

\subsection{Eigenvalues and eigenfunctions of $H$}

Recall that every eigenfunction of $H$ is of the form $f= \prod\limits_{i=1}^n e^{\frac{-a_i x_i^2}{2}}g(x)$, where $g$ is a polynomial. By slight abuse of notation, we define the degree of an eigenfunction $f$ as the degree of its associated polynomial $g$.

\bigskip

Note that $f_{k_1 \ldots k_n}$ is an eigenfunction of degree $k_1 + \ldots + k_n$ from equation~(\ref{fonctionpropre}).

\begin{remark}
	\textnormal{In the isotropic case, the eigenfunctions are ordered with their degrees as well as their eigenvalues. In the anisotropic case, the degrees of the eigenfunctions may not be strictly increasing.}
\end{remark}

We give upper bounds on the degree of $f_k$:

\begin{align}
\deg(f_k) \leq \max\limits_{\substack{k_1, k_2 \ldots, k_n \in \mathbb{Z}^+\\\sum\limits_{i=1}^n a_i (2 k_i + 1) \leq \lambda_k}} \,  \sum\limits_{i=1}^n k_i \, .
\end{align}

Take $i$ such that $a_i = \min \left\{ a_j, j=1, \ldots, n \right\}$. The maximum is obtained in the previous sum by putting $k_j=0$ when $j \neq i$ and maximizing $k_i$, namely

\begin{align}
\label{degre}
\deg(f_k(x)) \leq \frac{\lambda_k-\frac{n}{2}}{2 \min\limits_{i=1,\ldots, n} a_i} \, .
\end{align}

Let $N(\lambda)$ be the number of eigenvalues of $H$ that are not greater than $\lambda$. We have

\begin{equation*}
N(\lambda) = \mathrm{Card}\left( k_1, k_2 \ldots, k_n \in \mathbb{Z}^+ \, \left| \, \sum\limits_{i=1}^n a_i (2 k_i + 1) \leq \lambda \right. \right) \, .
\end{equation*}

Using the formula for the volume of an $n$-simplex, we obtain the following asymptotics when $\lambda \to +\infty$:

\begin{equation}
\label{anisotropic1}
N(\lambda)= \lambda^n \left( \frac{1}{2^n n! \prod\limits_{i=1}^n a_i} +o_\lambda(1) \right) \, .
\end{equation}

Also, if we put $\lambda = \lambda_k$ in (\ref{anisotropic1}), we get the following:

\begin{align*}
N(\lambda_k)= \lambda_k^n \left( \frac{1}{2^n n!\prod\limits_{i=1}^n a_i} + o(1)\right) \, .
\end{align*}

We remark that $N(\lambda_k) \geq k$ since $\lambda_k$ could have multiplicity greater than one. We can deduce the following:

\begin{align}
\label{lambdan}
\lambda_k^n \leq k\left( 2^n n! \prod\limits_{i=1}^{n} a_i + o(1)  \right) \, .
\end{align}

We can rewrite the previous equation the following way:

\begin{align}
\label{lambdan2}
\lambda_k \leq k^{1/n} \left( \left(2^n n! \prod\limits_{i=1}^{n} a_i\right)^{1/n} + o(1)  \right) \, .
\end{align}

Hence, from (\ref{degre}) and (\ref{lambdan2}) we have the following inequality for the degree of $f_k$:

\begin{align}
\label{degre2}
\deg(f_k(x)) \leq k^{1/n} \left( \frac{ \left(2^n n! \prod\limits_{i=1}^{n} a_i\right)^{1/n}}{2 \min\limits_{i=1\ldots n} a_i} + o_k(1)\right) \, .
\end{align}

\subsection{Unbounded nodal domains}

Let $\Omega$ be an unbounded nodal domain of $f_k$. Since, for all $k$, $f_k \in S(\mathbb{R}^n)$, we have the following equality:

\begin{align}
\label{eq1}
\lambda_k = \frac{\int_\Omega {{|\nabla f_k|}^2} + \int_\Omega {V(x)f_k^2}}{\int_\Omega {f_k^2}} \, .
\end{align}

\begin{lemma}
\label{lemma1}
For each nodal domain $\Omega$, there exists $x \in \Omega$ such that $V(x) \leq \lambda_k$
\end{lemma}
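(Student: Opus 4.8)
The plan is to prove the contrapositive via a Rayleigh quotient argument. The key identity is equation~(\ref{eq1}), which holds because $f_k \in S(\mathbb{R}^n)$ decays rapidly enough to justify integration by parts on $\Omega$ with no boundary contribution (the eigenfunction and its normal derivative vanish on $\partial\Omega$, and the integrals at infinity converge). This expresses $\lambda_k$ as a weighted average over $\Omega$ of the quantity $|\nabla f_k|^2/f_k^2 + V(x)$.

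Suppose, for contradiction, that $V(x) > \lambda_k$ for every $x \in \Omega$. Then on all of $\Omega$ we have the pointwise bound $V(x) f_k^2 > \lambda_k f_k^2$, so that
\begin{equation*}
\int_\Omega V(x) f_k^2 > \lambda_k \int_\Omega f_k^2 \, .
\end{equation*}
Since $\int_\Omega |\nabla f_k|^2 \geq 0$ as well, the numerator of~(\ref{eq1}) strictly exceeds $\lambda_k \int_\Omega f_k^2$. Dividing by the positive quantity $\int_\Omega f_k^2$ gives
\begin{equation*}
\frac{\int_\Omega |\nabla f_k|^2 + \int_\Omega V(x) f_k^2}{\int_\Omega f_k^2} > \lambda_k \, ,
\end{equation*}
which contradicts~(\ref{eq1}). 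Hence there must exist at least one point $x \in \Omega$ with $V(x) \leq \lambda_k$, as claimed.

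The only place requiring genuine care is the validity of~(\ref{eq1}) itself, that is, justifying the integration by parts on the unbounded domain $\Omega$. The main obstacle is therefore not the inequality manipulation (which is elementary) but ensuring the boundary terms vanish: since $f_k$ solves $-\Delta f_k + V f_k = \lambda_k f_k$, multiplying by $f_k$ and integrating over $\Omega$ produces $\int_\Omega |\nabla f_k|^2 - \int_{\partial\Omega} f_k \,\partial_\nu f_k + \int_\Omega V f_k^2 = \lambda_k \int_\Omega f_k^2$, and the boundary integral over $\partial\Omega$ vanishes because $f_k \equiv 0$ on the nodal set $\partial\Omega$. The rapid (Gaussian) decay of Schwartz functions guarantees all three integrals over $\Omega$ converge even when $\Omega$ is unbounded, so the identity holds and the argument goes through.
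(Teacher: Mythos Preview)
Your proof is correct and follows essentially the same route as the paper: assume $V(x) > \lambda_k$ throughout $\Omega$, insert this into the Rayleigh identity~(\ref{eq1}), drop the nonnegative gradient term, and arrive at the contradiction $\lambda_k > \lambda_k$. Your added justification of~(\ref{eq1}) via integration by parts and Schwartz decay is a helpful elaboration of what the paper simply asserts before the lemma, but the underlying argument is identical.
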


\begin{proof}

If for all $x \in \Omega$, $V(x) > \lambda_k$, then

\begin{align}
\lambda_k \, = \frac{\int_\Omega {{|\nabla f_k|}^2} + \int_\Omega {V(x)f_k^2}}{\int_\Omega {f_k^2}} \geq \frac{\int_\Omega {V(x)f_k^2}}{\int_\Omega {f_k^2}} > \frac{\int_\Omega {\lambda_k f_k^2}}{\int_\Omega {f_k^2}} = \lambda_k \, ,
\end{align}

hence a contradiction.

\end{proof}

\bigskip

Therefore, every unbounded nodal domain intersects the following ellipsoid: $$\left\{ x \in \mathbb{R}^n \, | \, V(x) = \lambda_k \right\} \, .$$

\bigskip

\subsection{Bounded nodal domains}

Let us now study the bounded nodal domains. Since $$f_k(x) = e^{-\frac{\sum\limits_{i=1}^n a_i x_i^2}{2}}g_k(x) \, ,$$ with $g_k(x)$ a polynomial, the nodal domains of $f_k$ are the same as the nodal domains of $g_k$. First, let us define a specific subset of $\mathbb{R}^n$. 

\begin{definition}
	\label{generalizedannulus}
Let $0 \leq b < B < +\infty$. We define a \textit{generalized annulus} as

\begin{equation}
\left\{ (x_1, \ldots, x_n) \in \mathbb{R}^n \, | \,  b < \sum\limits_{i=1}^n a_i x_i^2 < B  \right\} \, .
\end{equation}

\end{definition}

We have just shown that every nodal domain intersect the interior of the ellipsoid described above. We divide this region in a given number of generalized annuli. The number of generalized annuli will depend on the eigenfunction. The number of generalized annuli is quite important since we count the number of nodal domains in two ways: those that are contained in one generalized annulus and those that intersect more than one generalized annulus. Having more generalized annuli will restrict the former and increase the latter, and conversely. 

\bigskip

Let $M = M(\lambda_k)$ be the number of generalized annuli for a given eigenfunction. We will give an explicit formula for $M$ later.

\bigskip

Now, let us define the following sets:

	\begin{definition}
		\label{ai}
		$$A_i= \left\{ \Omega \, \left| \, \, \forall x \in \Omega, (\frac{(i-1)}{M})^{2/n} {\lambda_k} \leq {V(x)} < (\frac{i}{M})^{2/n} {\lambda_k} \right. \right\} \, .$$ Here, $i$ can take the values $1,2, \ldots, M$.
	\end{definition}

	\begin{definition}
		\label{bj}
		$$B_j = \left\{ \Omega \, \left| \, \, \Omega \cap \left\lbrace V(x) = (\frac{j}{M})^{2/n} {\lambda_k}\right\rbrace \neq \emptyset   \right. \right\} \, .$$ Again, $j$ can take the values $1,2, \ldots, M$.
	\end{definition}

In fact, every nodal domain, bounded or unbounded, is included in one of those sets. Indeed, as shown in Lemma \ref{lemma1}, for each nodal domain $\Omega$, there exists $x \in \Omega$ such that $ V(x) \leq \lambda_k$. Hence, by the connectedness of each nodal domain, it belongs to one of the $A_i$ or $B_j$.

\subsection{Nodal domains intersecting more than one generalized annulus}

Let $f: \mathbb{R}^n \to \mathbb{R}$ be a polynomial of degree $k$ in $n$ variables. We wish to give an upper bound on the number of nodal domains of $f$ on the unit $n$-ball. Let $G(n,d)= (2+d)(1+d)^{n-1}$.

\bigskip

Let $F^{+} = \left\{ x \in B^n \, | \, f(x) > 0 \right\}$. First, we show that the number of connected components of $F^+$ has an upper bound that depends only on the degree of $f$. We can find the following result in \cite{Milnor}:

\begin{theorem}[Milnor]
\label{theorembettipolynome}
Let $f$ be a real polynomial of degree $d$ in $n$ variables. We define $P$ as follows:

\begin{equation*}
P = \left\{ x \in B^n \, | \, f(x) \geq 0 \right\} \, .
\end{equation*}

Then the first Betti number of $P$ is not greater than $G(n,d)$.
\end{theorem}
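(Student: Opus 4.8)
The plan is to control the full sum of Betti numbers $\sum_i b_i(P)$, which dominates the first Betti number, by reducing $P$ to a smooth algebraic hypersurface and then bounding the topology of that hypersurface by Morse theory together with Bézout's theorem. First I would arrange a generic situation: after an arbitrarily small perturbation of $f$ one may assume that $0$ is a regular value of $f$ and that $\{f=0\}$ meets the sphere $S^{n-1}$ transversally. Since the Betti numbers of $P$ cannot increase under a sufficiently small deformation, it is enough to establish the bound under these genericity hypotheses.

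The central device is to encode the ball constraint into a single polynomial. On $B^n$ one has $1-\|x\|^2\geq 0$, so the polynomial
\[
\Phi(x)=f(x)\,\bigl(1-\|x\|^2\bigr)
\]
has degree $D=d+2$ and agrees in sign with $f$ throughout the ball; consequently $P=\{x\in B^n:\Phi(x)\geq 0\}$ and the topological boundary of $P$ is contained in the zero set $\{\Phi=0\}$. For a generic small $\epsilon>0$ the level set $\{\Phi=\epsilon\}$ is a smooth compact hypersurface of degree $D$ approximating $\partial P$, and by Milnor's thickening argument (replacing a region by a nearby regular level set and comparing Betti numbers via the associated Morse function) one obtains $\sum_i b_i(P)\leq \sum_i b_i(\{\Phi=\epsilon\})$.

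It then remains to bound the topology of a smooth hypersurface of degree $D$. On $\{\Phi=\epsilon\}$ I would use a generic linear height function $\ell(x)=\langle v,x\rangle$ as a Morse function; the Morse inequalities give $\sum_i b_i(\{\Phi=\epsilon\})\leq \#\{\text{critical points of }\ell|_{\{\Phi=\epsilon\}}\}$. A point is critical precisely when $\nabla\Phi$ is parallel to $v$, i.e. when the $2\times 2$ minors $v_i\,\partial_j\Phi-v_j\,\partial_i\Phi$ vanish; generically this amounts to $n-1$ equations of degree $D-1=d+1$ together with $\Phi=\epsilon$ of degree $D=d+2$. Applying Bézout's theorem to the complexified system bounds the number of solutions by $(d+2)(d+1)^{n-1}=G(n,d)$, which gives the claim.

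The main obstacle I anticipate lies in the reduction steps rather than in the final count. One must choose the perturbation of $f$, the regular value $\epsilon$, and the direction $v$ simultaneously generic so that $\{\Phi=\epsilon\}$ is smooth and compact, $\ell$ restricts to a genuine Morse function, and the critical system is zero-dimensional with every solution accounted for — in particular controlling spurious solutions at infinity when passing from the real count to the complex Bézout bound. Equally delicate is the homotopical comparison $\sum_i b_i(P)\leq\sum_i b_i(\{\Phi=\epsilon\})$, which rests on the upper semicontinuity of Betti numbers under small deformations and on the relation between a region and its bounding level hypersurface; this is where I would expect to spend the most effort.
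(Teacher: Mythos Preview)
The paper does not prove this statement at all; it is quoted from Milnor's 1964 paper and only the conclusion is used (see the remark immediately following the theorem). What you have sketched is essentially Milnor's own argument---reduce to a smooth situation, pick a generic Morse function, and bound the number of critical points by B\'ezout---so you are reconstructing the cited result rather than offering an alternative route.

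That said, two concrete points in your sketch would need repair. First, the product encoding does not give $P=\{x\in B^n:\Phi(x)\ge 0\}$: on the sphere $\|x\|=1$ one has $\Phi=0$ regardless of the sign of $f$, so the right--hand side equals $P\cup S^{n-1}$, not $P$. Second, and more seriously, the level set $\{\Phi=\epsilon\}$ is in general \emph{not} compact (for instance $n=2$, $f(x,y)=x$ gives $\Phi=x(1-x^2-y^2)$, and $\{\Phi=\epsilon\}$ is an unbounded cubic curve), so a linear height function is not proper on it and the Morse/B\'ezout count does not apply as written. Milnor avoids both issues by working inside a large ball from the outset and by using the squared distance $\|x-a\|^2$ from a generic point as the Morse function; this is proper, has a single interior critical point, and yields exactly the B\'ezout bound $(d+2)(d+1)^{n-1}=G(n,d)$ for the boundary critical points. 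Finally, the intermediate inequality $\sum_i b_i(P)\le\sum_i b_i(\{\Phi=\epsilon\})$ is not the natural step here and would itself require justification; Morse theory for manifolds with boundary bounds $\sum_i b_i$ of the region directly by the number of boundary critical points, which is the cleaner path.
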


Recall that the first Betti number of a manifold is equal to the number of its connected components.

\begin{remark}
	\textnormal{
		The original theorem gives an upper bound on the sum of the Betti numbers of a real algebraic manifold. Moreover, the Betti numbers are all nonnegative. Also, we could not find a similar result for the sum of the Betti numbers of $\left\{ x \in B^n \, | \, f(x) > 0 \right\}$. Hence, we must add a few more arguments to complete the proof.}
\end{remark}

Let $P_m = \left\{ x \in B^n \, | \, f(x) \geq 1/m \right\}$.

\bigskip

Then, the number of connected components of $P_m$ is not greater than $G(n,d)$. Furthermore, $F^{+} = \lim\limits_{m\rightarrow \infty} P_m$.

\begin{lemma}
\label{F+}
The number of connected components of $F^{+}$ is not greater than $G(n,d)$.
\end{lemma}

\begin{proof}
Suppose that $F^{+}$ has more than $G(n,d)$ connected components. \linebreak Choose connected components $\left\{a_i\right\}, i=1,2, \ldots, G(n,d) +1$ of $F^+$. Take $s_i \in a_i$ such that for all $x \in a_i$, $f(x) \leq f(s_i)$. We can always find such $s_i$ by the compactness of $\overline{a_i}$ and the continuity of $f$.

\bigskip

Now, define $S = \min \left\{f(s_i), i=1,2, \ldots, G(n,d)+1 \right\}$. There exists $m \in \mathbb{N}$ such that $1/m < S$. For each connected component $a_i$, there exists a connected component $b_i \subset P_m$ such that $b_i \subset a_i$. However, that would imply that $P_m$ has at least $G(n,d)+1$ connected components, which would contradict Theorem \ref{theorembettipolynome}.

\end{proof}

We can now give an upper bound on the number of nodal domains of a polynomial on $B^n$.

\begin{proposition}
	\label{theoremboule}
	Let $f: \mathbb{R}^n \to \mathbb{R}$ a polynomial of degree $d$. The number of nodal domains of $f$ in $B^n$ is not greater than $2G(n,d)$.
\end{proposition}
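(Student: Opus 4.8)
The plan is to reduce everything to Lemma \ref{F+}, which already controls the positive part. A nodal domain of $f$ is a connected component of $\{x \in B^n \mid f(x) \neq 0\}$, and this set decomposes as the disjoint union of the two open sets $F^+ = \{x \in B^n \mid f(x) > 0\}$ and $F^- = \{x \in B^n \mid f(x) < 0\}$. First I would observe that each nodal domain lies entirely in one of these two sets: a nodal domain is connected, $f$ is continuous, and $f$ does not vanish on it, so by the intermediate value property $f$ cannot change sign across it. Consequently the nodal domains split cleanly into the connected components of $F^+$ (the positive nodal domains) together with the connected components of $F^-$ (the negative nodal domains).

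The second step is to bound each family. By Lemma \ref{F+} applied to $f$, the number of connected components of $F^+$ is at most $G(n,d)$. For the negative nodal domains I would apply the same lemma to the polynomial $-f$: since $-f$ has the same degree $d$ and $F^- = \{x \in B^n \mid (-f)(x) > 0\}$, Lemma \ref{F+} gives at most $G(n,d)$ connected components for $F^-$ as well. Adding the two bounds yields at most $G(n,d) + G(n,d) = 2G(n,d)$ nodal domains in total, which is the claimed inequality.

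There is essentially no serious obstacle remaining, since the genuinely delicate work (invoking Milnor's theorem and passing to the limit of the superlevel sets $P_m$) has already been carried out in establishing Lemma \ref{F+}. The only point that needs a word of care is the sign-splitting: one should note that the decomposition of the nodal domains into positive and negative ones is exhaustive, which is immediate as long as $f$ is not identically zero so that its zero set has empty interior. Thus the proof amounts to the symmetry observation that $-f$ is a polynomial of the same degree, allowing Lemma \ref{F+} to be used twice.
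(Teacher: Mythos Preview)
Your argument is correct and is essentially identical to the paper's own proof: both split the nodal set into $F^{+}$ and $F^{-}$, invoke Lemma~\ref{F+} (equivalently, the same argument applied to $-f$) to bound each by $G(n,d)$, and add. The only extra content in your write-up is the careful justification of the sign-splitting, which the paper leaves implicit.
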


\begin{proof}

Let $F^{-}= \left\{ x \in B^n \, | \, f(x)<0 \right\}$. Clearly, $F^{+}$ and $F^{-}$ are disjoint. By the same argument as before, the number of connected components of $F^{+} \bigcup F^{-}$ is not greater than $2 G(n,d)$. 

\end{proof}

\bigskip

Now, let us find an upper bound on the number of nodal domains of the restriction of a polynomial in $n$ variables to $S^{n-1}$.

\begin{proposition}
\label{theoremsphere2}
Let $f: \mathbb{R}^n \to \mathbb{R}$ be a polynomial of degree $d$. Then, the number of nodal domains of the restriction of $f$ to $\mathbb{S}^{n-1}$ is not greater than $2^{2n-1} d^{n-1}$.
\end{proposition}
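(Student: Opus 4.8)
The plan is to reduce the count on $\mathbb{S}^{n-1}$ to a count on the ball $B^{n-1}$ of one lower dimension, so that Proposition \ref{theoremboule} can be applied with $n$ replaced by $n-1$; the drop from $d^n$ to $d^{n-1}$ is exactly what the codimension-one restriction to the sphere should buy us. First I would set up a bookkeeping step. Cover $\mathbb{S}^{n-1}$ by the two open hemispheres $U^{\pm}=\{x\in\mathbb{S}^{n-1}\,|\,\pm x_n>0\}$ (the equator $\{x_n=0\}$ is $(n-2)$-dimensional and hence negligible for an upper bound on open nodal domains). If $\Omega$ is a nodal domain of $f|_{\mathbb{S}^{n-1}}$, it must meet $U^+$ or $U^-$; on that hemisphere $\Omega\cap U^{\pm}$ is a nonempty open set where $f\neq 0$, so it contains at least one nodal domain of $f|_{U^{\pm}}$. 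Assigning to each $\Omega$ one such sub-domain gives an injection, since domains arising from distinct $\Omega$ are disjoint. Hence $\mu(f|_{\mathbb{S}^{n-1}})\leq \mu(f|_{U^+})+\mu(f|_{U^-})$, and it remains to bound the number of nodal domains of $f$ on a single hemisphere.

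Next I would turn each hemisphere count into a genuine polynomial count on $B^{n-1}$. Using the chart $y=(x_1,\dots,x_{n-1})\in B^{n-1}$, $x_n=\pm\sqrt{1-|y|^2}$, I would group $f$ by powers of $x_n$ and reduce even powers via $x_n^2=1-|y|^2$, obtaining $f|_{U^{\pm}}=A(y)\pm\sqrt{1-|y|^2}\,B(y)$ with $A,B$ polynomials of degree at most $d$. This is not a polynomial, which is the crux of the matter. I would resolve it by introducing the auxiliary polynomial $g(y)=A(y)^2-(1-|y|^2)B(y)^2$, of degree at most $2d+2$, which factors as $g=(A-\sqrt{1-|y|^2}B)(A+\sqrt{1-|y|^2}B)$. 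Since the zero set of either hemisphere function is contained in $\{g=0\}$, we have $\{g\neq 0\}\subseteq\{f|_{U^{\pm}}\neq 0\}$, so the partition of $B^{n-1}$ into nodal domains of $g$ refines the one induced by $f|_{U^{\pm}}$. Consequently each nodal domain of $f|_{U^{\pm}}$ contains at least one nodal domain of $g$ (using $g\not\equiv 0$, a degenerate case dispatched by perturbation), and therefore $\mu(f|_{U^{\pm}})\leq \mu(g|_{B^{n-1}})$. The same $g$ works for both signs, which is why only this one auxiliary polynomial appears.

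Finally I would apply Proposition \ref{theoremboule} to $g$ in dimension $n-1$, giving $\mu(g|_{B^{n-1}})\leq 2\,G(n-1,2d+2)$, and combine with the hemisphere inequality to get
\[
\mu(f|_{\mathbb{S}^{n-1}})\leq 4\,G(n-1,2d+2)=4(2d+4)(2d+3)^{n-2},
\]
which is a polynomial in $d$ of degree $n-1$. The last step is the routine estimate bounding this crude expression by the clean form $2^{2n-1}d^{n-1}$.

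The step I expect to be the main obstacle is the passage through the square root: replacing the non-polynomial $A\pm\sqrt{1-|y|^2}\,B$ by the honest polynomial $g$ and verifying that this only \emph{coarsens} the nodal partition, so that the count inequality points the right way. I would also note that the squaring inflates the degree from $d$ to roughly $2d$, which is why the final constant $2^{2n-1}$ is far from sharp; the two-hemisphere reduction above keeps it as small as this method allows, and in any case only the asymptotic $O(d^{n-1})$ behaviour is needed for the proof of Theorem \ref{theoremprincipal}.
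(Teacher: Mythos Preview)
Your approach is essentially identical to the paper's: split the sphere into two hemispheres, write $f$ on each chart as $A(y)\pm\sqrt{1-|y|^2}\,B(y)$, multiply by the conjugate to obtain the genuine polynomial $g=A^2-(1-|y|^2)B^2$ on $B^{n-1}$, and apply Proposition~\ref{theoremboule} in dimension $n-1$. The paper carries out exactly these steps (with $x_1$ in place of $x_n$), so the strategy is the same.

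Two small points are worth tightening. First, since $B$ collects the odd powers of $x_n$, one has $\deg B\le d-1$, hence $\deg g\le 2d$ rather than $2d+2$; this is what the paper uses. Second, your ``routine estimate'' $4(2d+4)(2d+3)^{n-2}\le 2^{2n-1}d^{n-1}$ is not actually valid for all $d\ge 1$ (e.g.\ $n=2$, $d=1$ gives $24\le 8$), so the stated constant $2^{2n-1}$ is not recovered as written. With the sharper degree $2d$ the numerics are closer to the paper's, and in any case you are right that for Lemma~\ref{lemmabl} only the $O(d^{n-1})$ growth matters, so this is a cosmetic issue rather than a gap in the method.
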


\begin{proof}

On $\mathbb{S}^{n-1}$, we can use the relation $x_1^2 = 1 - \sum\limits_{i=2}^{n} x_i^2$. We can then rewrite $f$ in the following form: 

\begin{equation*}
f(x_1,x_2,\ldots,x_n) = g(x_2,x_3,\ldots,x_n) + x_1 \cdot h(x_2,x_3,\ldots,x_n) \, .
\end{equation*}

\bigskip

Here, $g$ is a polynomial of degree at most $d$ and $h$ is a polynomial of degree at most $d-1$.

\bigskip

Now, define ${\bar{f}}: \mathbb{R}^n \to \mathbb{R}$ as follows: $$\bar{f}(x_1,x_2,\ldots,x_n)=g(x_2,x_3,\ldots,x_n) - x_1 \cdot h(x_2,x_3,\ldots,x_n)$$ 

On $\mathbb{S}^{n-1}$, we have the following:

\begin{align}
f \cdot \bar{f} = g^2(x_2,x_3,\ldots,x_n) + (\sum\limits_{i=2}^{n}x_i^2 -1) \cdot h^2(x_2,x_3,\ldots,x_n) \, .
\end{align}

Hence, $f\bar{f}$ is a polynomial of degree $2d$ in only $n-1$ variables.

\bigskip

Define $\phi$ by:

\begin{align*}
\phi: B^{n-1} \to \left\{x \in \mathbb{S}^{n-1} \, | \, x_1 > 0 \right\} \, , \\
\phi(x_2, \ldots, x_n) = (\sqrt{1- \sum\limits_{i=2}^{n}x_i^2}, x_2,\ldots, x_n) \, .
\end{align*}

\bigskip

Let $\mathbf{f} : B^{n-1} \to \mathbb{R}$, $\tilde{f} = (f\bar{f}) \circ \phi$. It is the restriction of a polynomial of degree $2d$ in $n-1$ variables on the unit ball in $\mathbb{R}^{n-1}$. By Proposition \ref{theoremboule}, the number of nodal domains of $\mathbf{f}$ in $B^{n-1}$ is not greater than $(2+2d)(1+2d)^{n-2}$.

\bigskip

We have the following for $d \geq 1$:

\begin{align}
(2+2d)(1+2d)^{n-2} < 2^{2n-2}d^{n-1} \, .
\end{align}

\bigskip

The function $\phi$ projects the nodal domains of $\mathbf{f}$ onto $\mathbb{S}^{n-1}$. Hence, the number of nodal domains of $f\bar{f}$ in $\left\{x \in \mathbb{S}^{n-1} \, | \, x_1 > 0 \right\}$ is not greater than $2^{2n-2}d^{n-1}$.

\bigskip

By the same argument, the number of nodal domains of $f\bar{f}$ \linebreak in $\left\lbrace x \in \mathbb{S}^{n-1} \, \left| \, x_1 < 0 \right. \right\rbrace $ is not greater than $2^{2n-2}d^{n-1}$. Furthermore, each nodal domain is either located in the upper part of the $n$-sphere, the lower part of the $n$-sphere or both. Since the number of nodal domains of $f$ is not greater than the number of nodal domains of $f\bar{f}$, we conclude the proof.

\end{proof}

By rescaling variables, we can easily prove the following corollary:

\begin{corollary}
	\label{corollary1}
	Let $a \in \mathbb{R}, a > 0$ and let $f: \mathbb{R}^n \to \mathbb{R}$ be a polynomial of degree $d$. Then, the number of nodal domains of the restriction of $f$ on $\left\lbrace V(x)=a\right\rbrace $  is not greater than $2^{2n-1} d^{n-1}$.
\end{corollary}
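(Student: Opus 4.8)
The plan is to reduce the ellipsoid $\{V(x) = a\}$ to the unit sphere $\mathbb{S}^{n-1}$ by a linear change of variables and then quote Proposition \ref{theoremsphere2}. By (\ref{H}) the quadratic form is $V(x) = \sum_{i=1}^n a_i^2 x_i^2$ with every $a_i > 0$, and $a > 0$, so I would introduce coordinates $y_i = (a_i/\sqrt{a})\, x_i$, for which $\sum_{i=1}^n y_i^2 = V(x)/a$; on the level set $\{V(x) = a\}$ this equals $1$. Thus the diagonal linear map $\Phi(x) = y$ sends $\{V(x) = a\}$ bijectively onto $\mathbb{S}^{n-1}$, with inverse given by $x_i = (\sqrt{a}/a_i)\, y_i$.

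First I would record that $\Phi$ is a linear isomorphism of $\mathbb{R}^n$, hence a homeomorphism, which therefore maps the ellipsoid homeomorphically onto the sphere. Next I would define the pulled-back polynomial $\tilde{f}(y) = f(\Phi^{-1}(y)) = f\!\left(\tfrac{\sqrt a}{a_1} y_1, \ldots, \tfrac{\sqrt a}{a_n} y_n\right)$. Because each $x_i$ has been replaced by a scalar multiple of $y_i$, the substitution is linear in each variable and the total degree is unchanged, so $\tilde{f}$ is again a polynomial of degree $d$.

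The key step is to observe that $\Phi$ carries the zero set of $f$ on $\{V(x) = a\}$ onto the zero set of $\tilde{f}$ on $\mathbb{S}^{n-1}$, and, being a homeomorphism, it induces a bijection between the connected components of their complements. Hence $f|_{\{V(x)=a\}}$ and $\tilde{f}|_{\mathbb{S}^{n-1}}$ have exactly the same number of nodal domains. Applying Proposition \ref{theoremsphere2} to $\tilde{f}$ bounds this number by $2^{2n-1} d^{n-1}$, which is the claimed estimate.

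There is essentially no serious obstacle here; the only points requiring care are checking that $\Phi$ really is a bijection from the ellipsoid onto the full sphere (which follows from the positive-definiteness of $V$ together with $a > 0$) and that a diagonal rescaling preserves the degree of the polynomial, so that the exponent $n-1$ in the bound is unaffected. Once these two routine facts are in place, the corollary follows immediately from Proposition \ref{theoremsphere2}.
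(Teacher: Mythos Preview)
Your proposal is correct and follows exactly the approach the paper indicates: the paper states that the corollary follows from Proposition~\ref{theoremsphere2} ``by rescaling variables,'' and your diagonal change of coordinates $y_i = (a_i/\sqrt{a})\,x_i$ makes this precise. The two points you flag as needing care (bijectivity onto the sphere and preservation of degree under a nonsingular diagonal rescaling) are indeed the only things to check, and both are immediate.
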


\bigskip

We can now give an upper bound on the number of nodal domains that intersect more than one generalized annulus.

\begin{lemma}
\label{lemmabl}
There exists $C > 0$ such that for all $k$,
$$\mathrm{Card} \left(\bigcup\limits_{j=1}^{M}B_j \right) \leq C M k^{\frac{n-1}{n}} \, .$$
\end{lemma}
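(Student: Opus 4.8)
The plan is to bound each $\mathrm{Card}(B_j)$ separately and then sum over the $M$ boundary ellipsoids. Recall from Definition \ref{bj} that a nodal domain $\Omega$ lies in $B_j$ precisely when it meets the ellipsoid $E_j = \{V(x) = (j/M)^{2/n}\lambda_k\}$, which is a set of the form $\{V(x) = a\}$ with $a > 0$. Since the cardinality of a union never exceeds the sum of the cardinalities, $\mathrm{Card}(\bigcup_{j=1}^M B_j) \leq \sum_{j=1}^M \mathrm{Card}(B_j)$, so it suffices to bound $\mathrm{Card}(B_j)$ uniformly in $j$ and then combine with the degree estimate (\ref{degre2}).

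The central step is to compare the nodal domains of $g_k$ (equivalently $f_k$, since the two have identical nodal domains) that meet $E_j$ with the nodal domains of the surface restriction $g_k|_{E_j}$. I would argue that the map sending a connected component $C$ of $\{g_k \neq 0\}\cap E_j$ to the unique nodal domain of $g_k$ containing it is well defined: because $g_k$ has constant nonzero sign on the connected set $C \subset \mathbb{R}^n$, the set $C$ lies inside a single connected component of $\{g_k \neq 0\}$, i.e.\ a single nodal domain. Conversely, every $\Omega \in B_j$ contains at least one such $C$, since $\Omega \cap E_j$ is a nonempty relatively open subset of $E_j$ on which $g_k$ keeps a fixed sign. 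Distinct nodal domains are disjoint, hence are hit by distinct components $C$, and this yields $\mathrm{Card}(B_j)$ is at most the number of nodal domains of $g_k|_{E_j}$.

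By Corollary \ref{corollary1}, the number of nodal domains of $g_k|_{E_j}$ is at most $2^{2n-1}(\deg g_k)^{n-1} = 2^{2n-1}(\deg f_k)^{n-1}$, a bound independent of $j$. Summing over $j$ gives
\begin{align*}
\mathrm{Card}\left(\bigcup_{j=1}^{M}B_j\right) \leq M \cdot 2^{2n-1}(\deg f_k)^{n-1}.
\end{align*}
To finish I would substitute the degree estimate (\ref{degre2}): it shows $\deg f_k \leq k^{1/n}(c + o_k(1))$ for an explicit constant $c$ depending only on $n$ and the $a_i$. Since the $o_k(1)$ term is bounded in $k$, we have $\deg f_k \leq c' k^{1/n}$ for all $k$ with a fixed constant $c'$; raising to the power $n-1$ yields $(\deg f_k)^{n-1} \leq (c')^{n-1} k^{(n-1)/n}$, and absorbing $2^{2n-1}(c')^{n-1}$ into $C$ gives the claim for all $k$.

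The main obstacle is the comparison step of the second paragraph: establishing a genuine injection from nodal domains of $g_k$ meeting $E_j$ into nodal domains of the surface restriction $g_k|_{E_j}$. The delicate point is to check that the component-to-domain assignment is single valued and respects disjointness, the key observation being that a connected set on which $g_k$ has fixed nonzero sign cannot straddle two distinct nodal domains. Everything after this reduces to the already-established Corollary \ref{corollary1} and a routine substitution of the degree bound.
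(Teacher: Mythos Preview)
Your proof is correct and follows the same route as the paper: bound each $\mathrm{Card}(B_j)$ by the number of nodal domains of $g_k$ restricted to the ellipsoid via Corollary~\ref{corollary1}, sum over $j$, and substitute the degree estimate~(\ref{degre2}). You actually spell out the injection from $B_j$ into the surface nodal domains more carefully than the paper does, which simply invokes Corollary~\ref{corollary1} directly; your added justification is sound and fills a step the paper leaves implicit.
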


\begin{proof}

Recall definition \ref{bj} for the sets $B_j$. By the Corollary \ref{corollary1}, $\mathrm{Card}(B_j) \leq 2^{2n-1} {\deg(f_k)}^{n-1}$ for $1 \leq l \leq M$. We now have the following inequality:

\begin{align}
\mathrm{Card} \left(\bigcup\limits_{j=1}^{M}B_j \right) \leq M  2^{2n-1} (\deg(f_k))^{n-1} \, .
\end{align}

We replace $\deg(f_k)$ as in equation (\ref{degre2}):

\begin{align}
\mathrm{Card} \left(\bigcup\limits_{j=1}^{M}B_j \right) \leq M 2^{2n-1}  [k^{1/n} \left( \frac{ (2^n n! \prod\limits_{i=1}^{n} a_i)^{1/n}}{2 \min\limits_{i=1\ldots n} a_i} + o_k(1)\right)]^{n-1} \, .
\end{align}

Here, the error term depends only on $k$ (and not $f_k$) so there exists a constant $C > 0$ such that

\begin{align}
\label{ineqbl}
\mathrm{Card} \left(\bigcup\limits_{j=1}^{M}B_j \right)\leq  C M  k^{\frac{n-1}{n}} \, .
\end{align}
\end{proof}

As a result of this, if we take $M$ to grow slower than $k^{\frac{1}{n}}$, the last term will be negligible in our final estimate.

\subsection{Nodal domains contained in a single generalized annulus}

We now turn to the study of nodal domains strictly contained in a single generalized annulus. We first recall Faber-Krahn's inequality in dimension $n$. Let $\Omega$ be a bounded domain of $\mathbb{R}^n$. The first Dirichlet eigenvalue $\lambda_1(\Omega)$ satisfies the following:

	\begin{equation}
	\label{faberkrahn}
	\lambda_1(\Omega) \geq \left( \frac{1}{|\Omega|} \right)^{\frac{2}{n}} {\sigma_n}^{\frac{2}{n}} (j_{\frac{n}{2}-1})^2 \, .
	\end{equation}
	
	As before, $j_{\frac{n}{2}-1}$ is the first zero of the Bessel function of the first kind $J_{\frac{n}{2}-1}$ and $\sigma_n$ is the volume of the unit ball in $\mathbb{R}^n$.

\bigskip

Now, let $\Omega$ be a bounded nodal domain of $f_k$. We have the following inequality:

\begin{equation}
\label{ineqaire3}
\frac{\int_\Omega {{|\nabla f_k|}^2}}{\int_\Omega {f_k^2}} \geq \left(\frac{1}{|\Omega|}\right)^{\frac{2}{n}}{\sigma_n}^{\frac{2}{n}}{(j_{\frac{n}{2}-1})^2} \, .
\end{equation}

Recall definition \ref{ai} for the sets $A_i$, as well as equation (\ref{eq1}). For each $\Omega \in A_i$,

\begin{align}
\label{ineqaire4}
\frac{\int_\Omega {{|\nabla f_k|}^2}}{\int_\Omega {f_k^2}} < \lambda_k - (\frac{i}{M})^{\frac{2}{n}} \lambda_k \, .
\end{align}

Combining (\ref{ineqaire3}) and (\ref{ineqaire4}), we get:

\begin{equation}
\label{ineqvolume}
|\Omega| \geq \frac{\sigma_n (j_{\frac{n}{2}-1})^{n}}{(\lambda_k - (\frac{i}{M})^{\frac{2}{n}}\lambda_k)^{\frac{n}{2}}} \, .
\end{equation}

Let $w_n(x)$ denote the volume of an $n$-ball of radius $x$. The volume of the generalized annulus in which each element of $A_i$ can be found is

\begin{align}
\label{volume10}
\frac{1}{\prod\limits_{i=1}^n a_i} \left( w_n\left(\left(\frac{i}{M}\right)^{\frac{1}{n}} \sqrt{\lambda_k}\right)-w_n\left(\left(\frac{i-1}{M}\right)^{\frac{1}{n}} \sqrt{\lambda_k}\right) \right) \nonumber\\
= \frac{1}{M \prod\limits_{i=1}^n a_i} \sigma_n \lambda_k^{\frac{n}{2}} \, .
\end{align}

Combining (\ref{ineqvolume}) and (\ref{volume10}), we get the following:

\begin{align}
\label{volume2}
\mathrm{Card}(A_i) \leq \frac{\lambda_{k}^n }{ (j_{\frac{n}{2}-1})^n \prod\limits_{i=1}^n a_i} \frac{(1- (\frac{i}{M})^{\frac{2}{n}})^{\frac{n}{2}}}{M} \, .
\end{align}

Using the last inequality, we get the following inequality for the number of elements in every $A_i$:

\begin{align}
\label{sumtointegral}
\mathrm{Card}( \bigcup\limits_{i=1}^{M} A_i) \leq \sum\limits_{i=1}^{M} \frac{\lambda_{k}^n }{(j_{\frac{n}{2}-1})^n \prod\limits_{i=1}^n a_i} \frac{(1- (\frac{i}{M})^{\frac{2}{n}})^{\frac{n}{2}}}{M} \, .
\end{align}

Here, the function $f(x) = (1- x^{\frac{2}{n}})^{\frac{n}{2}}$ is integrable over $[0,1]$, hence the Riemann sum with the partition $\left\{i/{M}\right\}, i=0 \ldots {M}$ converges to the value of the integral when $M$ goes to infinity. 

\bigskip

Choose $M$ such that $M$ goes to infinity with $k$ slower than $k^{\frac{1}{n}}$. Then,

\begin{equation}
\mathrm{Card}( \bigcup\limits_{i=1}^{M} A_i) \leq \frac{\lambda_k^n}{(j_{\frac{n}{2}-1})^n \prod\limits_{i=1}^n a_i} \left( \int_{0}^{1}{(1- x^{\frac{2}{n}})^{\frac{n}{2}} dx} + o_{k}(1) \right) \, .
\end{equation}
\bigskip

We can now compute the integral. Using the substitution $u= x^{\frac{2}{n}}$ (see for example \cite{Gradshteyn}) gives us the following:

\begin{align}
 \int_{0}^{1}{(1- x^{\frac{2}{n}})^{\frac{n}{2}} dx} = \frac{n}{2} \frac{\Gamma(\frac{n}{2}) \Gamma(\frac{n}{2} +1)}{\Gamma(n+1)}= \frac{n^2 \Gamma(n/2)^2}{2^2 n!} \, .
\end{align}

Using equation (\ref{lambdan2}), we get:

\begin{align}
\mathrm{Card}( \bigcup\limits_{i=1}^{M} A_i) \leq  \frac{k\left(2^n n! \prod\limits_{i=1}^{n} a_i + o_k(1)  \right)}{(j_{\frac{n}{2}-1})^n \prod\limits_{i=1}^n a_i} \left( \frac{n^2 \Gamma(n/2)^2}{2^2 n!} + o_{k}(1) \right) \, ,
\end{align}

and finally:

\begin{align}
\label{eqq1}
\mathrm{Card}( \bigcup\limits_{i=1}^{M} A_i) \leq  k  \left( \frac{2^{n-2}n^2  \Gamma(\frac{n}{2})^2}{(j_{\frac{n}{2}-1})^n } + o_k(1)\right)
\end{align}

Combining equation  (\ref{eqq1}) and Lemma \ref{lemmabl} and recalling the fact that we chose $M$ to grow slower than $k^{\frac{1}{n}}$, we get the final inequality:

\begin{align}
\limsup\limits_{k\rightarrow \infty} \frac{N(f_k)}{k} \leq \frac{2^{n-2} n^2 \Gamma(\frac{n}{2})^2}{(j_{\frac{n}{2}-1})^n } \, ,
\end{align}

which completes the proof of Theorem $\ref{theoremprincipal}$.

\section{Proof of Theorem \ref{theoremeprincipal2}}

	Take the coefficients $a_i$ to be rationally independent. Under this assumption, the eigenvalues of $H$ are simple. We know that the $n$-th hermite polynomial has exactly $n$ zeros. Hence, the eigenfunction $f_{k_1, \ldots, k_n}(x)$ has exactly $\prod\limits_{i=1}^n (k_i+1)$ nodal domains. We have the following expression for the maximal number of nodal domains of $f_\lambda$:

	\begin{align}
	\mu(f_\lambda) = \sup\limits_{\substack{k_1, \ldots, k_n\in {\mathbb{Z}^+}\\  \sum\limits_{i=1}^n a_i(2k_i + 1) \leq \lambda}} \prod\limits_{i=1}^n (k_i+1) \, .
	\end{align}
	
	We can give an upper bound on $\mu(f_\lambda)$ in the following way:
	
	\begin{align}
	\label{nodalsup1}
	\mu(f_\lambda) \leq \sup\limits_{\substack{k_1, \ldots, k_n\in {\mathbb{R}^+}\\  \sum\limits_{i=1}^n a_i (2k_i + 1) \leq \lambda}} \prod\limits_{i=1}^n (k_i+1) \, .
	\end{align}
	
	We start by proving the following lemma.
	
	\begin{lemma}
		\label{lemmasup}
		Let $\lambda > 0, a_1, a_2, \ldots, a_n \in \mathbb{R}^+$. We have the following:
		
		\begin{equation}
		\sup\limits_{\substack{k_1, \ldots, k_n\in {\mathbb{R}^+}\\ \sum\limits_{i=1}^n a_i k_i \leq \lambda}} \prod\limits_{i=1}^n k_i = \frac{\lambda^n}{n^n \prod\limits_{i=1}^n a_i} \, .
		\end{equation}
		\end{lemma}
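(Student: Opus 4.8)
The plan is to recognize this as a standard constrained optimization: maximize the product $\prod_{i=1}^n k_i$ over the region $\{(k_1,\ldots,k_n)\in\mathbb{R}^+ \,:\, \sum_{i=1}^n a_i k_i \leq \lambda\}$. I would attack it directly with the arithmetic--geometric mean inequality applied to the quantities $a_i k_i$, which handles both the upper bound and the attainment of equality in one stroke.

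First I would reduce to the boundary. Since each $a_i > 0$ and the product is strictly increasing in each coordinate, any feasible point with $\sum_{i=1}^n a_i k_i < \lambda$ can be improved by scaling the $k_i$ up, so the supremum over the closed constraint region equals the supremum over the hyperplane $\sum_{i=1}^n a_i k_i = \lambda$. Next, for the upper bound, I would apply AM--GM to the $n$ nonnegative numbers $a_1 k_1, \ldots, a_n k_n$:
\begin{equation*}
\left(\prod_{i=1}^n a_i k_i\right)^{1/n} \leq \frac{1}{n}\sum_{i=1}^n a_i k_i \leq \frac{\lambda}{n} \, .
\end{equation*}
Raising to the $n$-th power and dividing by $\prod_{i=1}^n a_i$ yields $\prod_{i=1}^n k_i \leq \frac{\lambda^n}{n^n \prod_{i=1}^n a_i}$, which bounds the supremum from above.

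For the reverse inequality I would use the equality case of AM--GM, which holds exactly when all the $a_i k_i$ coincide; combined with $\sum_{i=1}^n a_i k_i = \lambda$ this forces $a_i k_i = \lambda/n$, i.e. $k_i = \frac{\lambda}{n a_i}$. This point lies in $\mathbb{R}^+$ (strictly positive since $\lambda, a_i > 0$) and is feasible, and substituting it gives $\prod_{i=1}^n k_i = \prod_{i=1}^n \frac{\lambda}{n a_i} = \frac{\lambda^n}{n^n \prod_{i=1}^n a_i}$, matching the upper bound. Hence the supremum is in fact attained and equals $\frac{\lambda^n}{n^n \prod_{i=1}^n a_i}$.

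I expect no serious obstacle here; the only point needing a line of care is justifying that the bound is tight on the feasible region, which the explicit maximizer $k_i = \lambda/(n a_i)$ settles. An alternative route via Lagrange multipliers produces the same critical relation $a_j k_j = \lambda/n$ for all $j$, but it would additionally require verifying that this critical point is a global maximum rather than a saddle, so I find the AM--GM argument cleaner and self-contained.
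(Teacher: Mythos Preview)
Your proof is correct and essentially the same as the paper's: both reduce the problem to the AM--GM inequality (the paper phrases it as concavity of $\log$ after the change of variables $k_i \mapsto a_i k_i$, which is the standard proof of AM--GM). Your version is slightly more direct in that you apply AM--GM to the quantities $a_ik_i$ without first substituting, and you make the attainment of the maximum explicit, but the underlying idea is identical.
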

		
		\begin{proof}
			We start by putting
			\begin{align*}
			\sup\limits_{\substack{k_1, \ldots, k_n\in {\mathbb{R}^+}\\ \sum\limits_{i=1}^n a_i k_i \leq \lambda}} \prod\limits_{i=1}^n k_i &= \frac{1}{\prod\limits_{i=1}^n a_i} \sup\limits_{\substack{k_1, \ldots, k_n\in {\mathbb{R}^+}\\ \sum\limits_{i=1}^n a_i k_i \leq \lambda}} \prod\limits_{i=1}^n a_ik_i\\
			&= \frac{1}{\prod\limits_{i=1}^n a_i} \sup\limits_{\substack{k_1, \ldots, k_n\in {\mathbb{R}^+}\\ \sum\limits_{i=1}^n k_i \leq \lambda}} \prod\limits_{i=1}^n k_i \, .
			\end{align*}
			
			We can use the fact that $\log$ is an increasing and concave function:
			
			\begin{align*}
			\sup\limits_{\substack{k_1, \ldots, k_n\in {\mathbb{R}^+}\\ \sum\limits_{i=1}^n a_i k_i \leq \lambda}} \prod\limits_{i=1}^n k_i &= \frac{1}{\prod\limits_{i=1}^n a_i} \exp \left( \sup\limits_{\substack{k_1, \ldots, k_n\in {\mathbb{R}^+}\\ \sum\limits_{i=1}^n k_i \leq \lambda}} \sum\limits_{i=1}^n \log{k_i} \right)\\
			&=  \frac{1}{\prod\limits_{i=1}^n a_i} \exp \left( \sum\limits_{i=1}^n \log(\lambda/n)\right)\\
			&= \frac{\lambda^n}{n^n \prod\limits_{i=1}^n a_i} \, .
			\end{align*}
			
			\end{proof}
			
			Now, take $\lambda >> 0$. We can rewrite equation (\ref{nodalsup1}) in the following way:

			\begin{align*}
			\mu(f_\lambda) &\leq \sup\limits_{\substack{k_1, \ldots, k_n\in {\mathbb{R}^+}\\ \sum\limits_{i=1}^n a_i (2 k_i + 1) \leq \lambda}} \prod\limits_{i=1}^n (k_i+1)\\
			&= \sup\limits_{\substack{k_1, \ldots, k_n\in {\mathbb{R}^+}\\  2\sum\limits_{i=1}^n a_i k_i  \leq \lambda-3\sum\limits_{i=1}^n a_i}} \prod\limits_{i=1}^n k_i \, .
			\end{align*}
			
			By Lemma \ref{lemmasup}, we have the following estimate for $\mu(f_\lambda)$:
			
			\begin{align}
			\label{rational1}
			\mu(f_\lambda) \leq \frac{\lambda^n}{2^n n^n \prod\limits_{i=1}^{n} a_i } + o(\lambda^n) \, .
			\end{align}

			Combining (\ref{rational1}) and (\ref{anisotropic1}), we obtain
			
			\begin{align*}
			\limsup\limits_{\lambda \to \infty}\frac{\mu(f_\lambda)}{N(\lambda)} = U(n) \, ,
			\end{align*}
			
			with $U(n)= \frac{n!}{n^n}$. 
			
			\bigskip
			
			Now, let us check that this upper bound is attained by a sequence of eigenfunctions. First, we see that

			\begin{align*}
			\sup\limits_{\substack{k_1, \ldots, k_n\in {\mathbb{Z}^+}\\  \sum\limits_{i=1}^n a_i(2k_i + 1) \leq \lambda}} \prod\limits_{i=1}^n (k_i+1) &\geq \sup\limits_{\substack{k_1, \ldots, k_n\in {\mathbb{Z}^+}\\  \sum\limits_{i=1}^n a_i(2k_i + 1) \leq \lambda}} \prod\limits_{i=1}^n (k_i)\\
			&= \sup\limits_{\substack{k_1, \ldots, k_n\in {\mathbb{R}^+}\\  2\sum\limits_{i=1}^n a_i k_i  \leq {\lambda-\sum\limits_{i=1}^n a_i}}} \prod\limits_{i=1}^n k_i \, .
			\end{align*}
			
			We use Lemma \ref{lemmasup} to obtain:
			
			\begin{align*}
			\sup\limits_{\substack{k_1, \ldots, k_n\in {\mathbb{Z}^+}\\  \sum\limits_{i=1}^n a_i(2k_i + 1) \leq \lambda}} \prod\limits_{i=1}^n (k_i+1) \geq \frac{\lambda^n}{2^n n^n \prod\limits_{i=1}^{n} a_i } + o(\lambda^n) \, .
			\end{align*}
			
			This means that for every $\lambda > 0$, there exists an eigenfunction $f_\lambda$ such that \linebreak $$\mu(f_\lambda) \geq \frac{\lambda^n}{2^n n^n \prod\limits_{i=1}^{n} a_i } + o(\lambda^n) \, .$$
			
			We can then construct a sequence of eigenfunctions $f_{n_k}$ such that $$\limsup\limits_{k\to \infty} \frac{\mu(f_{n_k})}{n_k} = U(n) \, .$$ This shows that $U(n)$ is indeed optimal, which completes the proof of theorem \ref{theoremeprincipal2}.
			
			\bigskip

			 Let us compare $U(n)$ with $\gamma(n)$:
			
			\begin{proposition}
				For all $n \geq 2$, $U(n) < \gamma(n)$. Furthermore, $$ \frac{\gamma(n)}{U(n)} >  2^{n-\frac{5}{2}} \sqrt{\pi n} e^{-2\sqrt{n}} (1 + o_n(1)) \, $$ as $n$ goes to infinity.
				
				\bigskip
				
				Therefore, $U(n)$ decays much faster than $\gamma(n)$ as $n$ goes to infinity.
			\end{proposition}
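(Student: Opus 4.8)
The plan is to reduce the entire comparison to an asymptotic evaluation of the ratio $\gamma(n)/U(n)$ by Stirling's formula, the only nontrivial analytic ingredient being an upper bound on the Bessel zero $j_{\frac n2-1}$. First I would write the quotient explicitly as
\[
\frac{\gamma(n)}{U(n)} = \frac{2^{n-2}\,n^2\,\Gamma(n/2)^2\,n^n}{(j_{\frac n2-1})^n\,n!},
\]
and substitute $n! = \sqrt{2\pi n}\,(n/e)^n(1+o_n(1))$ together with $\Gamma(n/2)=\sqrt{2\pi}\,(n/2)^{(n-1)/2}e^{-n/2}(1+o_n(1))$. The factors $e^{\pm n}$ cancel and the powers of $n/2$ collapse, leaving
\[
\frac{\gamma(n)}{U(n)} = 2^{\,n-\frac12}\sqrt{\pi n}\left(\frac{n/2}{j_{\frac n2-1}}\right)^{\!n}(1+o_n(1)),
\]
so that all of the dependence on the Bessel zero is concentrated in the single factor $\big((n/2)/j_{\frac n2-1}\big)^n$.

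Next I would estimate that factor from below. Since $j_\nu=\nu+O(\nu^{1/3})$ as $\nu\to\infty$, for every sufficiently large $\nu$ the crude bound $j_\nu\le \nu+\sqrt{2\nu}$ holds; taking $\nu=n/2-1$ gives $j_{\frac n2-1}/(n/2)\le 1-\tfrac2n+\tfrac{2}{\sqrt n}(1+o_n(1))$, and expanding $n\log(1+\cdot)$ yields
\[
\left(\frac{n/2}{j_{\frac n2-1}}\right)^{\!n}\ \ge\ c\,e^{-2\sqrt n}(1+o_n(1))
\]
for an absolute constant $c>\tfrac14$. Inserting this into the previous display produces
\[
\frac{\gamma(n)}{U(n)}\ \ge\ 2^{\,n-\frac52}\sqrt{\pi n}\,e^{-2\sqrt n}(1+o_n(1)),
\]
which is precisely the quantitative assertion. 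Since $2^n e^{-2\sqrt n}=e^{n\log 2-2\sqrt n}\to\infty$, this lower bound tends to infinity; moreover the true correction factor decays only like $e^{-O(n^{1/3})}$, far slower than $e^{-2\sqrt n}$, so there is ample room and $U(n)$ indeed decays much faster than $\gamma(n)$.

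It remains to promote ``for all sufficiently large $n$'' to ``for all $n\ge2$''. For this I would fix an explicit threshold $N_0$ beyond which the displayed asymptotic lower bound already exceeds $1$, so that $\gamma(n)>U(n)$ automatically for $n\ge N_0$, and then verify $U(n)<\gamma(n)$ by hand for the finitely many remaining indices $2\le n<N_0$, comparing the tabulated values of $\gamma(n)$ with the elementary rationals $U(n)=n!/n^n$ (for instance $U(2)=\tfrac12<0.69166=\gamma(2)$). The main obstacle is the Bessel-zero step: one must commit to a genuinely rigorous, explicit upper bound for $j_{\frac n2-1}$ valid throughout the relevant range---rather than merely the asymptotic $j_\nu\sim\nu$---and check that this bound is simultaneously weak enough to hold and strong enough to produce the $e^{-2\sqrt n}$ factor. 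The surrounding Stirling estimates are routine but must be carried with explicit error control to justify the uniform $(1+o_n(1))$ terms and to make the bridging at $N_0$ airtight.
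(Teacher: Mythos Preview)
Your plan is correct and follows essentially the same route as the paper: write the ratio $\gamma(n)/U(n)$ explicitly, feed in Stirling for $\Gamma(n/2)$ and $n!$, control the remaining factor $\big((n/2)/j_{\frac n2-1}\big)^n$ via an explicit upper bound on the Bessel zero, and dispose of small $n$ by a finite check. The only organizational difference is that the paper first proves $\gamma(n)/U(n)>1$ by an even/odd case split using the crude bound $j_{u-1}<\sqrt{2}\,u$ (valid for $u>10$, so a Mathematica check handles $n\le 21$), and only afterwards derives the asymptotic lower bound; you instead derive the asymptotic first and let it absorb the ``ratio exceeds $1$'' statement for large $n$, which is a little cleaner and avoids the parity split. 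For the step you flag as the main obstacle, the paper invokes the explicit inequality $j_u<\sqrt{u+1}(\sqrt{u+2}+1)$ from Helffer--Persson, which with $u=\tfrac n2-1$ gives $j_{\frac n2-1}\le(\sqrt{n/2}+\sqrt{1/2})^2=\tfrac n2(1+1/\sqrt n)^2$ and hence $\big((n/2)/j_{\frac n2-1}\big)^n\ge (1+1/\sqrt n)^{-2n}>e^{-2\sqrt n}$; citing this would close the one gap you left open.
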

			
			\begin{proof}
				We start by putting
\begin{align*}
\frac{\gamma(n)}{U(n)} = \frac{2^{n-2}n^2 \Gamma(\frac{n}{2})^2 n^n}{n! (j_{\frac{n}{2}-1})^n} \, .
\end{align*}			
			
			If $n=2k$, we have
			
		\begin{align*}
		{\frac{\gamma(n)}{U(n)}} &= 	\frac{2^{2k-2}(2k)^2 \Gamma(k)^2 (2k)^{2k}}{(2k)! (j_{k-1})^{2k}}\\
		&= \frac{2^{4k} (k!)^2 k^{2k}}{(2k)! (j_{k-1})^{2k}} \, .
		\end{align*}
			
		It is shown in \cite{HelfferPersson} that for $u > 0$, $\sqrt{u(u+2)} < j_u < \sqrt{u+1}(\sqrt{u+2}+1)$. Hence, for $u>10$, $j_{u-1} < \sqrt{2}u$. Also, $(2k)! <  2^{3k} (k!)^2$ for $k \geq 1$. Combining those two facts with the previous equation, we get for $k > 10$
			
			\begin{align*}
			{\frac{\gamma(n)}{U(n)}} = \frac{2^{3k}(k!)^2}{(2k)!} \frac{(\sqrt{2}k)^{2k}}{(j_{k-1})^{2k}} > 1	\, .		
			\end{align*}
			
		If $n=2k+1$, we have
		
		\begin{align*}
		{\frac{\gamma(n)}{U(n)}} = 	\frac{2^{2k-1}(2k+1)^2 \Gamma(k+1/2)^2 (2k+1)^{2k+1}}{(2k+1)! (j_{k-1/2})^{2k+1}}
		\end{align*}
		
		Using the identity $\Gamma(k+ 1/2) = \frac{(2k)!}{4^k k!}\sqrt{\pi}$, we get

		\begin{align*}
		{\frac{\gamma(n)}{U(n)}} &= \frac{\pi 2^{2k-1}(2k+1)^2 ((2k)!)^2 (2k+1)^{2k+1}}{ 2^{4k}(k!)^2 (2k+1)! (j_{k-1/2})^{2k+1}}\\
		&=	\frac{\pi (2k+1)! (2k+1)^{2k+1} }{(j_{k-1/2})^{2k+1} 2^{2k+1}(k!)^2} \, .
		\end{align*}
		
		We use the fact that $(2k+1)! > 2^{2k}(k!)^2$ and that $j_{u-1/2} < \sqrt{2}(u-1/2)$ for $u > 10$ to obtain for $k > 10$ that
		
		\begin{align*}
		{\frac{\gamma(n)}{U(n)}} = \frac{(2k+1)!}{2^{2k}(k!)^2} \frac{(2k+1)^{2k+1} }{(j_{k-1/2})^{2k+1}} \frac{\pi}{2} > 1 \, .
		\end{align*}
		
		We only need to check that $\gamma(n) > U(n)$ for $n=1,2,\ldots, 21$, which is done using Mathematica.
		
		\bigskip
		
		Now, using Stirling's formula and the estimate $j_{\frac{n}{2}-1} \leq \left( \sqrt{\frac{n}{2}} + \sqrt{\frac{1}{2}}  \right)^2$, we have the following:
		
		\begin{align*}
			{\frac{\gamma(n)}{U(n)}} &> \frac{2^{n-2}\Gamma(\frac{n}{2}+1)^2 n^n}{\left( \sqrt{\frac{n}{2}} + \sqrt{\frac{1}{2}}  \right)^2 n!}\\
			&= \frac{2^{n-2} (\frac{n}{2e})^n \pi n (1 + o_n(1))  \sqrt{n}^{2n} }{\left( \sqrt{\frac{n}{2}} + \sqrt{\frac{1}{2}}  \right)^{2n} (\frac{n}{e})^n \sqrt{2\pi n}(1 + o_n(1)) }\\
			&= \frac{2^{n-\frac{5}{2}} \sqrt{\pi n}}{ \left( 1 + \sqrt{\frac{1}{n}}  \right)^{2n}}(1 + o_n(1))
		\end{align*}
		
	Now, we use the fact that $\left( 1 +\sqrt{\frac{1}{n}}  \right)^{\sqrt{n}} < e$ to obtain as $n$ goes to infinity
	
	\begin{align*}
	{\frac{\gamma(n)}{U(n)}} &> 2^{n-\frac{5}{2}} \sqrt{\pi n} e^{-2\sqrt{n}} (1 + o_n(1)) \, .
	\end{align*}
	
		\end{proof}
		
\begin{remark}
	It is clear that the constant $\gamma(n)$ can be improved for the quantum harmonic oscillator. It is still unknown if the constant $U(n)$ is the optimal constant in the general case. There is a similar question concerning Pleijel's theorem for the Dirichlet or Neumann Laplacian. In the case of an irrationnal rectangle, the constant $\gamma(n)$ can be lowered to $\frac{2}{\pi}$. It has been conjectured by I. Polterovich in \cite{Polterovich} that $\frac{2}{\pi}$ is the optimal constant for any planar domain.
\end{remark}

			\bigskip

\subsection*{Thanks}This paper is based on a MSc thesis written under the supervision of Iosif Polterovich. I would like to thank I. Polterovich for his guidance, as well as Bernard Helffer for suggesting the original problem and numerous helpful remarks. This article was completed while I was visiting the Laboratoire Jean Leray in Nantes, and its hospitality is gratefully acknowledged. I would also like to thank Guillaume Roy-Fortin for his comments and suggestions. This research was partially supported by the FRQNT.

\centering{Philippe Charron\\
	Université de Montréal\\
	2920, Chemin de la Tour\\
	Montréal, QC \\
	H3T 1J4}

\end{document}